\theoremstyle{plain}
\newtheorem{thm}{Theorem}[section] 
\newtheorem{cor}[thm]{Corollary}
\newtheorem{lem}[thm]{Lemma}
\newtheorem{lemma}[thm]{Lemma}
\newtheorem{prop-defi}[thm]{Definition \& Proposition}
\newtheorem{prop}[thm]{Proposition}
\newtheorem*{thm*}{Theorem}
\newtheorem*{prop*}{Proposition}
\newtheorem*{cor*}{Corollary}
\theoremstyle{definition}
\newtheorem{defi}[thm]{Definition}
\newtheorem{rem}[thm]{Remark}
\newtheorem*{claim*}{Claim}
\numberwithin{equation}{section}
\declaretheorem[style=theorem,name={Theorem}]{theoremletter}
\newcommand{\NN}{{\mathbb N}}
\newcommand{\ZZ}{{\mathbb Z}}
\newcommand{\BB}{{\mathbb B}}
\newcommand{\QQ}{{\mathbb Q}}
\newcommand{\RR}{{\mathbb R}}
\newcommand{\CC}{{\mathbb C}}
\newcommand{\MM}{{\mathbb M}}
\newcommand{\C}{{\mathscr C}}
\renewcommand{\max}{{\operatorname{max}}}
\newcommand{\ip}[2]{\left\langle {#1}\hspace{0.05cm}, \hspace{0.05cm}{#2}\right \rangle}
\newcommand{\varps}{{\varepsilon}}
\newcommand{\tr}{{\operatorname{tr}}}
\newcommand{\alg}{{\operatorname{alg}}}
\newcommand{\Tr}{\operatorname{Tr}}
\newcommand{\Pol}{{\operatorname{Pol}}}
\renewcommand{\min}{{\operatorname{min}}}
\renewcommand{\leq}{\leqslant}
\renewcommand{\geq}{\geqslant}
\newcommand{\Proj}{{\operatorname{Proj}}}
\newcommand{\Cstar}{\operatorname{C^*}} 
\newcommand{\Cstarred}{\operatorname{C_r^*}} 
\newcommand{\eps}{\varepsilon} 
\definecolor{vadimgreen}{rgb}{0,0.7,0.3}
\title{Uniqueness questions for $\Cstar$-norms on group rings}
\author{Vadim Alekseev}
\address{Vadim Alekseev, Technische Universit\"{a}t Dresden, Fakult\"{a}t Mathematik, Institut f\"{u}r Geometrie, 01062 Dresden, Germany}
\email{vadim.alekseev@tu-dresden.de}
\author{David Kyed}
\address{David Kyed, Department of Mathematics and Computer Science, University of Southern Denmark, Campusvej 55, 5230 Odense M, Denmark}
\email{dkyed@imada.sdu.dk}
\keywords{Group rings, $\Cstar$-norms, the Atiyah conjecture}
\subjclass[2010]{16S34, 46L05, 46L10}
\begin{document}
\onehalfspace
\maketitle
\begin{abstract}
We provide a large class of discrete amenable groups for which the complex group ring has several $\Cstar$-completions, thus providing partial evidence towards a positive answer to a question raised by Rostislav Grigorchuk, Magdalena Musat and Mikael R{\o}rdam.

\end{abstract}

\section{Introduction}
The interplay between group theory and operator algebras dates back to the seminal papers by Murray and von Neumann \cite{rings-of-operators} and by choosing different completions of a discrete countable group $\Gamma$ one obtains interesting analytic objects; for instance the Banach algebra $\ell^1(\Gamma)$, the full and reduced $\Cstar$-algebras $\Cstar(\Gamma)$ and $\Cstarred(\Gamma)$, and the group von Neumann algebra $L\Gamma$. In general there are many norms on, say, $\ell^1(\Gamma)$ such that the completion with respect to this norm gives a $\Cstar$-algebra, and the question of when the $\Cstar$-completion is unique (in which case $\Gamma$ is said to be \emph{$\Cstar$-unique}) has been studied by various authors  \cite{Ng-Permanence-properties, Boidol, Barnes}.   A $\Cstar$-unique discrete group is evidently amenable and it is, to the best of the authors' knowledge, an open question whether the converse is true, although it is known to be false in the more general context of locally compact groups \cite{Ng-Permanence-properties}. More recently, the paper \cite{GMR} put emphasis on the question of when the complex group algebra $\CC\Gamma$ has a unique $\Cstar$-completion. As is easily seen \cite[Proposition 6.7]{GMR}, if $\Gamma$ is locally finite (i.e.~if every finitely generated subgroup is finite) then $\CC\Gamma$ has a unique $\Cstar$-completion, and \cite[Question 6.8]{GMR} asks if the converse is true. The present paper provides partial evidence towards a positive answer to this, in that we prove that for the following classes of non-locally finite groups have several $\Cstar$-completions.

\begin{theoremletter}[see Proposition \ref{thm:central-element-of-infinite-order} and Corollary \ref{cor:poly-growth}]\label{thm:A}
The class of countable groups $\Gamma$ for which $\CC\Gamma$ does not have a unique $\Cstar$-norm includes the following:
\begin{itemize}
\item[(i)] Infinite groups of polynomial growth.
\item[(ii)] Torsion free, elementary amenable groups with a non-trivial, finite conjugacy class.
\item[(iii)] Groups with a central element of infinite order.
\end{itemize}
\end{theoremletter}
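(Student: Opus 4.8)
The engine of the whole statement is case (iii), which I would prove by spreading $\CC\Gamma$ out over a circle of representations. Assume $\Gamma$ has a central element $g$ of infinite order and write $Z=\langle g\rangle\cong\ZZ$, whose dual is the circle $\TT$. Fix a transversal $\{r_q\}_{q\in Q}$ for $Z$ in $\Gamma$, where $Q=\Gamma/Z$; centrality of $Z$ turns the factor system $r_qr_{q'}=g^{\sigma(q,q')}r_{qq'}$ into a $\ZZ$-valued $2$-cocycle $\sigma$ on $Q$. For each $z\in\TT$ the twisted left regular representation of $Q$ attached to the $\TT$-valued cocycle $z^{\sigma}$, together with $g\mapsto z\cdot\id$, assembles into a unitary representation $\pi_z$ of $\Gamma$ on $\ell^2(Q)$, and decomposing $\ell^2(\Gamma)$ over $Z$ identifies the family $(\pi_z)_{z\in\TT}$ with $\lambda_\Gamma$. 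For a set $K\subseteq\TT$ put $\|x\|_K=\sup_{z\in K}\|\pi_z(x)\|$, a $C^*$-seminorm dominated by the maximal norm.

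Two points then remain. First, $\|\cdot\|_K$ is a genuine norm whenever $K$ is infinite: writing $x=\sum_q p_q(g)\,r_q$ with Laurent polynomials $p_q$, each matrix coefficient $z\mapsto\langle\pi_z(x)\delta_{q_0},\delta_{q_1}\rangle$ equals a monomial times $p_{q_1q_0^{-1}}(z)$, hence is a trigonometric polynomial; if $x\neq0$ some coefficient is not identically zero and so vanishes at only finitely many $z$, while $(\pi_z)_z=\lambda_\Gamma$ is faithful. Second, the norm genuinely drops: for $x=g+g^{-1}$ one has $\pi_z(x)=2\Re(z)\,\id$, so $\|x\|_K=\sup_{z\in K}|2\Re z|$, whereas $\|x\|_{\max}=2$ since each $\pi_z$ sends $g$ to $z$, forcing the spectrum of $g$ in $C^*_{\max}(\Gamma)$ to be all of $\TT$. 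Taking $K$ a closed arc bounded away from $\pm1$ gives $\|x\|_K<2=\|x\|_{\max}$, so $\|\cdot\|_K\neq\|\cdot\|_{\max}$ are two distinct $C^*$-norms.

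For (i) and (ii) I would reduce to this situation up to finite index. Both hypotheses furnish an element $a$ of infinite order with finite conjugacy class: in (ii) any non-identity element of the given class qualifies (torsion-freeness makes it of infinite order), and in (i) Gromov's theorem makes the (infinite, being non-locally finite) group virtually nilpotent, so a finite-index torsion-free nilpotent subgroup $N_0$ has non-trivial torsion-free centre, any non-trivial central element $a$ of which is centralised by $N_0$ and hence has finite $\Gamma$-conjugacy class. Put $A=\langle a^\Gamma\rangle$; as a finitely generated torsion-free $\FC$-group it is abelian, so $A\cong\ZZ^k$, and $H_0:=C_\Gamma(A)$ is a finite-index normal subgroup with $A\subseteq Z(H_0)$. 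Exactly as above, for each character $\chi\in\widehat A=\TT^k$ one builds $\pi^{H_0}_\chi$ and then $\Pi_\chi:=\operatorname{Ind}_{H_0}^\Gamma\pi^{H_0}_\chi$, a representation of $\Gamma$ satisfying $\bigoplus_{\chi\in\widehat A}\Pi_\chi=\lambda_\Gamma$.

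The candidate norm is $\bigoplus_{\chi\in Y}\Pi_\chi$ for $Y\subseteq\widehat A$, and I expect the delicate step to be the choice of $Y$. A short computation shows that on $\CC A$ this norm sees only the $\Gamma$-saturation of $Y$, namely $\sup_{\chi\in\Gamma\cdot Y}|\widehat x(\chi)|$ for $x\in\CC A$; so to get both a strict drop and faithfulness one needs a proper closed $\Gamma$-invariant subset $Y\subsetneq\widehat A$ with non-empty interior. This is exactly where the finite-conjugacy hypothesis pays off: since every vector in $A$ has finite $\Gamma$-orbit, the conjugation action of $\Gamma$ on $A$ factors through a finite subgroup of $GL_k(\ZZ)$ (a matrix is determined by the images of a basis, each lying in a finite orbit), so the action on $\widehat A=\TT^k$ is by a finite group and proper invariant sets with interior are abundant. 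Choosing $Y=\TT^k$ minus a thin invariant neighbourhood of the subtori $\{\chi(a)=\pm1\}$ makes $\bigoplus_{\chi\in Y}\Pi_\chi$ faithful (matrix coefficients are trigonometric polynomials on $\widehat A$, vanishing on a set with interior only if identically zero) while $\|a+a^{-1}\|_Y<2\leq\|a+a^{-1}\|_{\max}$, which completes the argument.
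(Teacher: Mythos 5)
Your proposal is correct in substance, and for parts (i) and (ii) it takes a genuinely different route from the paper. For (iii) the two arguments are siblings: the paper also restricts the regular representation to a piece of the dual circle of the central copy of $\ZZ$, but it does so by cutting $\lambda_\Gamma$ with the projection $p\in LZ\cong L^\infty(S^1)$ of the upper half-circle, and it proves faithfulness on $\CC\Gamma$ via the trace-preserving conditional expectation $E\colon L\Gamma\to LZ$ together with the fact that an element of $\Pol(z,\bar z)$ cannot vanish on a half-circle; you instead work fibrewise with the representations $\pi_z$ and count zeros of trigonometric polynomials directly. The paper's variant produces a norm below the \emph{reduced} norm (non-$\Cstarred$-uniqueness), yours one different from the \emph{maximal} norm, which is all Theorem~\ref{thm:A} asks (and the two coincide for amenable groups; in fact, by lower semicontinuity of $z\mapsto\|\pi_z(x)\|$ your $\|\cdot\|_K$ is dominated by the reduced norm as well). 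The real divergence is in (i) and (ii): the paper runs these through the strong Atiyah conjecture --- central granularity, torsion multiplier, the coset-decomposition lemma, and the quantization $\dim_{L\Lambda}\ker L_A\in\theta(\Lambda)\ZZ$ forcing faithfulness of the cut-down representation (Theorem~\ref{thm:main}, Corollary~\ref{cor:poly-growth}) --- and this is precisely why (ii) carries the hypothesis ``elementary amenable'' (needed to invoke Linnell's theorem). You bypass the Atiyah conjecture entirely: Schur's theorem makes $A=\langle a^\Gamma\rangle$ a finitely generated torsion-free abelian normal subgroup $\cong\ZZ^k$, conjugation acts on $\widehat A\cong\TT^k$ through a finite group, and faithfulness again follows because a nonzero trigonometric polynomial on $\TT^k$ cannot vanish on a set with nonempty interior. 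This buys real generality: your argument for (ii) never uses amenability, so it covers \emph{every} torsion-free group with a nontrivial finite conjugacy class. What it gives up is the stronger conclusion of non-$\Cstarred$-uniqueness and the wider scope of Corollary~\ref{cor:poly-growth} (all virtually polycyclic groups with infinite FC-centre).

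One step in (i) needs repair. You take $N_0$ torsion-free nilpotent of finite index, $a\in Z(N_0)$ nontrivial, and assert that $A=\langle a^\Gamma\rangle$ is a ``finitely generated torsion-free FC-group''. If $N_0$ is not normal, torsion-freeness of $A$ can fail: in $\Gamma=\ZZ\times S_3$ take a $3$-cycle $c$ and $N_0=\langle(1,c)\rangle$, a torsion-free (abelian) subgroup of index $6$, and $a=(1,c)$; then $a^\Gamma=\{(1,c),(1,c^2)\}$, so $A$ contains $(1,c)^{-1}(1,c^2)=(0,c)$ and equals $\ZZ\times\langle c\rangle$, which has $3$-torsion --- it is not of the form $\ZZ^k$ and your analysis of $\widehat A$ breaks down. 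The fix is cheap and standard: replace $N_0$ by its normal core, which is still torsion-free, nilpotent and of finite index; then conjugation preserves $Z(N_0)$, so $A\subseteq Z(N_0)$ is automatically finitely generated torsion-free abelian. With this correction (and with two cosmetic adjustments: (i) must be read, as the paper does in Corollary~\ref{cor:poly-growth}, as \emph{infinite} groups of polynomial growth, and your $\bigoplus_{\chi\in\widehat A}\Pi_\chi=\lambda_\Gamma$ should be a direct integral over $\widehat A$), your proof is complete.
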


The key to the proof of of (i) and (ii)  is the so-called {strong Atiyah conjecture} (see Section \ref{sec:atiyah})  which predicts a concrete restriction on the von Neumann dimension of kernels of elements in the complex group algebra under the left regular representation  --- notably these are predicted to be either zero or one if the group in question is torsion free.

\subsection*{Acknowledgements}
The authors would like to thank Mikael R{\o}rdam and Thomas Schick for helpful discussions revolving around the topics of the present paper as well as the anonymous referee for the valuable remarks improving Corollary 2.3 and Proposition 3.3.
DK gratefully acknowledge the financial support from  the Villum Foundation (grant no.~7423) and from the Independent Research Fund Denmark (grant no.~7014-00145B).

\section{Basic results on $\Cstar$-uniqueness}
In what follows, all discrete groups are implicitly assumed to be at most countable. We will use several operator algebras associated to a discrete group $\Gamma$: the maximal $\Cstar$-algebra $\Cstar(\Gamma)$, the reduced $\Cstar$-algebra $\Cstarred(\Gamma)$ and the von Neumann algebra $L\Gamma$. For more information on these, we refer to \cite[\S{}2.5]{brown-ozawa}. We recall that $L\Gamma = (\lambda(\CC\Gamma))'' \subset \mathbb B(\ell^2\Gamma)$ is generated by the left regular representation $\lambda\colon \CC\Gamma\to \BB(\ell^2\Gamma)$ and carries a canonical, faithful, normal trace given by $\tau(x) = \ip{x\delta_e}{\delta_e}$. In what follows, $\tr$ will denote the normalized trace on $\MM_n(\CC)$ while $\Tr$ will denote the non-normalized trace.

We begin by formally introducing the notion of $\Cstar$-uniqueness. In order to avoid a notational conflict with the already existing notions studied in  \cite{Ng-Permanence-properties, Boidol}, we emphasize that we are investigating the uniqueness of $\Cstar$-norms on the complex group algebra in contrast to the $\ell^1$-algebra. 

\begin{defi}
Let $\Gamma$ be a discrete group. $\CC\Gamma$ is said to be:
\begin{itemize}
\item[(i)] $\Cstar$-unique if it carries a unique $\Cstar$-norm;
\item[(ii)] $\Cstarred$-unique if no $\Cstar$-norm on $\CC\Gamma$ is properly majorised by the reduced $\Cstar$-norm.
\end{itemize}
$\Gamma$ is said to be algebraically $\Cstar$- (respectively $\Cstarred$-)unique if $\CC\Gamma$ is $\Cstar$- (respectively $\Cstarred$-)unique.
\end{defi}

Amenable groups are characterized by the property that the maximal and reduced $\Cstar$-algebras coincide, and thus a nonamenable group is never algebraically $\Cstar$-unique; on the other hand, for amenable groups the above notions coincide. 
Note also that the class of {$\Cstar$-simple} groups, which has recently received a lot of attention \cite{BKKO, Boudec}, falls within the class of algebraically $\Cstarred$-unique groups.
As already mentioned in the introduction, algebraic $\Cstar$-uniqueness appeared in the recent paper \cite{GMR} in which the authors observed that locally finite groups have this property and asked if this characterizes the class of locally finite groups. Below we prove a few basic permanence results regarding algebraic $\Cstar$-uniqueness, but before doing so we give an alternative characterization, which is straightforward algebraic adaptation of the similar result for $\ell^1$-algebras  \cite[Proposition 2.4]{Barnes}.

\begin{lem} Let $\Gamma$ be a discrete group. Then
$\CC\Gamma$ is $\Cstar$-unique (respectively $\Cstarred$-unique) if and only if every nontrivial closed, two-sided ideal in $\Cstar(\Gamma)$ (respectively $\Cstarred(\Gamma)$) intersects $\CC\Gamma$ non-trivially.
\end{lem}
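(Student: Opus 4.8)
The plan is to translate the statement into a dictionary between $\Cstar$-norms on $\CC\Gamma$ and closed two-sided ideals, and then read off the equivalence. I would treat both assertions in parallel by writing $(A,\|\cdot\|_A)$ for $(\Cstar(\Gamma),\|\cdot\|_{\max})$ in the first case and for $(\Cstarred(\Gamma),\|\cdot\|_{\red})$ in the second. The only structural inputs I need are that $\CC\Gamma$ is a dense $*$-subalgebra of $A$ and that $\|\cdot\|_A$ dominates the $\Cstar$-norms under consideration: for $A=\Cstar(\Gamma)$ this is the universal property, which forces \emph{every} $\Cstar$-norm $p$ on $\CC\Gamma$ to satisfy $p\le\|\cdot\|_{\max}$, whereas in the reduced case I simply restrict to those $p$ with $p\le\|\cdot\|_{\red}$, which are precisely the ones factoring through $\Cstarred(\Gamma)$.

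First I would pass from a norm to an ideal. Given a $\Cstar$-norm $p$ on $\CC\Gamma$ with $p\le\|\cdot\|_A$, the identity map on $\CC\Gamma$ is contractive from $(\CC\Gamma,\|\cdot\|_A)$ to $(\CC\Gamma,p)$ and therefore extends to a $*$-homomorphism $\pi_p\colon A\to A_p$ onto the completion $A_p$ of $(\CC\Gamma,p)$; it is surjective since its range is dense and closed. Setting $I_p:=\ker\pi_p$, this is a closed two-sided ideal of $A$, and because $p$ is an honest norm the restriction of $\pi_p$ to $\CC\Gamma$ is injective, so $I_p\cap\CC\Gamma=0$. Conversely, starting from a closed two-sided ideal $I\trianglelefteq A$ with $I\cap\CC\Gamma=0$, the composite $\CC\Gamma\hookrightarrow A\twoheadrightarrow A/I$ is an injective $*$-homomorphism into a $\Cstar$-algebra, so pulling back the quotient norm yields a $\Cstar$-norm $p_I\le\|\cdot\|_A$. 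These assignments are mutually inverse, giving a bijection between $\Cstar$-norms dominated by $\|\cdot\|_A$ and closed two-sided ideals meeting $\CC\Gamma$ trivially.

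Next I would pin down where the reference norm sits in this dictionary. If $I=0$ then plainly $p_I=\|\cdot\|_A$. For the converse, if $p=\|\cdot\|_A$ then $\pi_p$ is isometric on the dense subalgebra $\CC\Gamma$, hence isometric on all of $A$, hence an isometric isomorphism, so $I_p=\ker\pi_p=0$. Thus $p$ is \emph{properly} majorised by $\|\cdot\|_A$ exactly when $I_p\neq 0$. Specializing to $A=\Cstar(\Gamma)$, the existence of a $\Cstar$-norm other than $\|\cdot\|_{\max}$ is equivalent to the existence of a nonzero closed two-sided ideal $I\trianglelefteq\Cstar(\Gamma)$ with $I\cap\CC\Gamma=0$; negating both sides shows that $\CC\Gamma$ is $\Cstar$-unique precisely when every nontrivial closed two-sided ideal of $\Cstar(\Gamma)$ meets $\CC\Gamma$ nontrivially. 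The reduced statement follows verbatim with $A=\Cstarred(\Gamma)$, since $\Cstarred$-uniqueness is by definition the absence of a $\Cstar$-norm properly majorised by $\|\cdot\|_{\red}$.

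The only genuinely delicate point I anticipate is the implication $p=\|\cdot\|_A\Rightarrow I_p=0$: it relies on the standard fact that a surjective $*$-homomorphism of $\Cstar$-algebras which is isometric on a dense $*$-subalgebra is automatically injective, which is where completeness and the rigidity of $\Cstar$-norms quietly enter. Everything else is a formal unwinding of the universal properties, so I expect the write-up to be short once this dictionary is in place.
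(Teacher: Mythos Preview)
Your argument is correct and follows essentially the same route as the paper: both directions rely on the correspondence between $\Cstar$-norms dominated by the reference norm and closed ideals of $A$ meeting $\CC\Gamma$ trivially, via the quotient map $A\to A/I$ and the extension $\pi_p\colon A\to A_p$. Your write-up is somewhat more detailed (you package the two directions as a genuine bijection and explicitly invoke the universal property to ensure every $\Cstar$-norm is below $\|\cdot\|_{\max}$), but the underlying mechanism is identical.
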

\begin{proof}
We give the proof for the statement about algebraic $\Cstar$-uniqueness; the other case is obtained by replacing $\Cstar(\Gamma)$ by $\Cstarred(\Gamma)$ throughout the proof.
Assume that there is a non-trivial ideal $J\trianglelefteqslant \Cstar(\Gamma)$ intersecting $\CC\Gamma$ trivially
 and denote by $q\colon \Cstar(\Gamma) \to \Cstar(\Gamma)/J$ the quotient map. Composing $q$ with the inclusion $\CC\Gamma\hookrightarrow \Cstar(\Gamma)$ yields a faithful representation of $\CC\Gamma$ 
 and it defines a $\Cstar$-norm on it that is properly majorised by the maximal norm by non-triviality of $J$. Conversely, if there is a $\Cstar$-norm on $\CC\Gamma$ which is properly majorised by the norm coming from $\Cstar(\Gamma)$, then $\Cstar(\Gamma)$ surjects onto the corresponding quotient, and the kernel of this surjection is a non-trivial ideal intersecting $\CC\Gamma$ trivially. 
\end{proof}

\begin{cor}\label{cor:direct-products}
Let $\Gamma$ and $\Lambda$ be discrete groups. If $\CC(\Gamma\times \Lambda)$ is $\Cstar$-unique (respectively $\Cstarred$-unique), then so are $\CC\Gamma$ and $\CC\Lambda$.
\end{cor}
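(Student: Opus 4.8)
The plan is to argue contrapositively and to exploit the identification of $\Cstarred(\Gamma\times\Lambda)$ with the minimal tensor product $\Cstarred(\Gamma)\otimes_{\min}\Cstarred(\Lambda)$, under which $\CC(\Gamma\times\Lambda)$ corresponds to the algebraic tensor product $\CC\Gamma\otimes_{\alg}\CC\Lambda$. Suppose, say, that $\CC\Gamma$ fails to be $\Cstarred$-unique. By the previous lemma there is a nonzero closed two-sided ideal $J\trianglelefteqslant\Cstarred(\Gamma)$ with $J\cap\CC\Gamma=\{0\}$, and it suffices to manufacture from it a nonzero proper closed two-sided ideal of $\Cstarred(\Gamma\times\Lambda)$ meeting $\CC(\Gamma\times\Lambda)$ trivially, for this contradicts the $\Cstarred$-uniqueness of $\CC(\Gamma\times\Lambda)$ via the lemma again. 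The natural candidate is
\[
\tilde J:=\overline{J\otimes_{\alg}\Cstarred(\Lambda)}\subseteq\Cstarred(\Gamma)\otimes_{\min}\Cstarred(\Lambda),
\]
the closure of the algebraic tensor product, which is manifestly a closed two-sided ideal.

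The main tool is the family of slice maps associated to the Fourier-coefficient functionals on the second factor. For $h\in\Lambda$ let $\phi_h$ be the bounded functional on $\Cstarred(\Lambda)$ given by $\phi_h(x)=\ip{x\delta_e}{\delta_h}$, and let $R_{\phi_h}=\id\otimes\phi_h\colon\Cstarred(\Gamma)\otimes_{\min}\Cstarred(\Lambda)\to\Cstarred(\Gamma)$ be the corresponding slice map, which is bounded precisely because the tensor product is the minimal one. On elementary tensors $R_{\phi_h}(a\otimes b)=\phi_h(b)a$, so $R_{\phi_h}$ maps $J\otimes_{\alg}\Cstarred(\Lambda)$ into $J$ and hence, by continuity, sends all of $\tilde J$ into $\bar J=J$. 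On the other hand, an arbitrary element of $\CC\Gamma\otimes_{\alg}\CC\Lambda$ can be written as a finite sum $x=\sum_{h}a_h\otimes\lambda(h)$ with $a_h\in\CC\Gamma$, and since $\phi_h(\lambda(h'))=\delta_{h,h'}$ we obtain $R_{\phi_h}(x)=a_h$; thus the slice maps recover the Fourier coefficients of $x$ in the $\Lambda$-direction. Consequently, if $x\in\tilde J\cap(\CC\Gamma\otimes_{\alg}\CC\Lambda)$ then each $a_h=R_{\phi_h}(x)\in J\cap\CC\Gamma=\{0\}$, whence $x=0$; this gives $\tilde J\cap\CC(\Gamma\times\Lambda)=\{0\}$.

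It then remains to verify that $\tilde J$ is nonzero and proper, and to invoke symmetry. Nonzeroness is clear, since $j\otimes 1\in\tilde J$ has norm $\|j\|>0$ for any $0\neq j\in J$; properness follows from the same slice-map idea, since applying $R_\psi=\id\otimes\psi$ for a state $\psi$ on $\Cstarred(\Lambda)$ carries $\tilde J$ into $J$ but sends $1\otimes 1$ to $1\notin J$. The symmetric statement for $\CC\Lambda$ follows by interchanging the two factors. The step requiring the most care is the boundedness of the slice maps $R_{\phi_h}$: this is exactly the feature that singles out the minimal tensor product, and it explains why the statement is phrased for $\Cstarred$-uniqueness rather than $\Cstar$-uniqueness, the analogous slice maps on the maximal tensor product $\Cstar(\Gamma)\otimes_{\max}\Cstar(\Lambda)=\Cstar(\Gamma\times\Lambda)$ being in general not well behaved.
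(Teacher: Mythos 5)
Your argument is correct and is essentially the paper's own proof: the paper likewise argues via the ideal-intersection lemma, passing to the ideal $J\otimes_{\min}\Cstarred(\Lambda)\trianglelefteqslant \Cstarred(\Gamma)\otimes_{\min}\Cstarred(\Lambda)=\Cstarred(\Gamma\times\Lambda)$ and asserting (without the details you supply through slice maps) that it is nontrivial and meets $\CC(\Gamma\times\Lambda)=\CC\Gamma\otimes_{\alg}\CC\Lambda$ trivially. Only your closing aside is inaccurate: slice maps by bounded functionals do extend to maximal tensor products (every bounded functional is a linear combination of states, and u.c.p.\ maps tensor on $\otimes_{\max}$), so boundedness of slice maps is not the real reason the statement is phrased for $\Cstarred$-uniqueness --- but this remark is extraneous to your proof, which stands as written.
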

\begin{proof}

Let $J\trianglelefteqslant \Cstarred(\Gamma)$ be a non-trivial ideal intersecting $\CC\Gamma$ trivially. Then $J\otimes_{\max} \Cstarred(\Lambda) \trianglelefteqslant \Cstar(\Gamma)\otimes_{\max} \Cstar(\Lambda) = \Cstar(\Gamma\times\Lambda)$ is a non-trivial ideal intersecting $\CC (\Gamma\times\Lambda) = \CC \Gamma\otimes_{\alg} \CC\Lambda$ trivially.
The same proof with $\Cstar$ replaced by $\Cstarred$ and $\otimes_\max$ replaced by $\otimes_\min$ works for the reduced case.
\end{proof}

\begin{prop}\label{thm:central-element-of-infinite-order}
If $\Gamma$ is a discrete group with a central element of infinite order then $\CC\Gamma$ is not $\Cstarred$-unique.
\end{prop}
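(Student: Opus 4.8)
The plan is to apply the characterisation established in the lemma above: it suffices to exhibit a single non-trivial closed two-sided ideal $J\trianglelefteqslant \Cstarred(\Gamma)$ with $J\cap\CC\Gamma=\{0\}$. Let $g\in\Gamma$ be the given central element of infinite order and set $A:=C^*(\lambda(g))\subseteq\Cstarred(\Gamma)$. Since $\langle g\rangle\cong\ZZ$ and the restriction of $\lambda$ to a subgroup is a multiple of the regular representation of that subgroup, $A$ is canonically isomorphic to $\Cstarred(\langle g\rangle)\cong C(\TT)$, with $\lambda(g)$ corresponding to the identity function $z$; in particular the spectrum of $\lambda(g)$ is all of $\TT$. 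Because $g$ is central, $A$ lies in the centre of $\Cstarred(\Gamma)$.

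The key point, which also explains where the difficulty lies, is the choice of $F$. The naive candidate --- the ideal generated by $\lambda(g)-z_0$ for a fixed $z_0\in\TT$ --- does not work, since $\lambda(g)-z_0\lambda(e)$ already lies in $\CC\Gamma$, so that ideal meets $\CC\Gamma$ non-trivially. Instead I fix a proper closed \emph{infinite} subset $F\subsetneq\TT$ (concretely, a proper closed arc), let $I_F=\{f\in C(\TT):f|_F=0\}\trianglelefteqslant A$ be the corresponding ideal, and take $J:=\overline{I_F\cdot\Cstarred(\Gamma)}$, the closed two-sided ideal generated by $I_F$ (two-sided because $I_F$ is central). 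Since $F\neq\TT$ we may pick $0\neq f\in I_F$, and isometry of the functional calculus gives $0\neq f(\lambda(g))\in J$, so $J\neq\{0\}$.

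It remains to prove $J\cap\CC\Gamma=\{0\}$; this is the heart of the argument and is exactly where the infinitude of $F$ is used. Let $E\colon\Cstarred(\Gamma)\to A$ be the canonical conditional expectation onto $\Cstarred(\langle g\rangle)=A$, determined by $E(\lambda(s))=\lambda(s)$ if $s\in\langle g\rangle$ and $E(\lambda(s))=0$ otherwise. As $E$ is continuous and $A$-bimodular and $A$ is central, $E(I_F\cdot\Cstarred(\Gamma))\subseteq I_F\cdot E(\Cstarred(\Gamma))\subseteq I_F$, whence $E(J)\subseteq I_F$. Now take $x=\sum_s c_s\lambda(s)\in\CC\Gamma\cap J$ and fix $h\in\Gamma$. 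Then $x\lambda(h^{-1})\in J\cap\CC\Gamma$, so $E(x\lambda(h^{-1}))\in I_F$; on the other hand, retaining exactly the terms landing in $\langle g\rangle$ gives $E(x\lambda(h^{-1}))=\sum_{n\in\ZZ}c_{g^nh}\,\lambda(g^n)$, which under $A\cong C(\TT)$ is the Laurent polynomial $\sum_n c_{g^nh}\,z^n$. A non-zero Laurent polynomial has only finitely many zeros on $\TT$, so vanishing on the infinite set $F$ forces all its coefficients to vanish; taking $n=0$ yields $c_h=0$. As $h\in\Gamma$ was arbitrary, $x=0$. Hence $J\cap\CC\Gamma=\{0\}$, which in particular gives $\lambda(e)\notin J$ and so $J$ is proper, and the preceding lemma shows that $\CC\Gamma$ is not $\Cstarred$-unique.
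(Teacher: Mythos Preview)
Your proof is correct and follows essentially the same approach as the paper: both cut down by a proper closed arc in $\TT\cong\widehat{\langle g\rangle}$ and use the canonical conditional expectation onto $\Cstarred(\langle g\rangle)$ together with the fact that a nonzero Laurent polynomial cannot vanish on an infinite subset of $\TT$. The only cosmetic difference is that the paper phrases the construction via a projection $p\in L\langle g\rangle\cong L^\infty(\TT)$ and argues faithfulness through $E(a^*a)p=0\Rightarrow E(a^*a)=0$ plus the faithful trace, whereas you build the ideal $J$ directly and recover each coefficient $c_h$ by translating by $\lambda(h^{-1})$ before applying $E$.
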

\begin{proof}
Denote by $Z$ the subgroup in $\Gamma$  generated by a central element of infinite order. Then $\Cstarred(Z)\cong C(S^1)$ and $LZ\cong L^\infty(S^1)$ via the Fourier transform and we denote by $p\in LZ$ the projection corresponding to the characteristic function of the upper half circle $\{e^{i\theta}\mid \theta\in [0,\pi]\}$. Define $\pi:=\lambda_\Gamma p$; i.e.~the left regular representation of $\Gamma$ restricted to the invariant subspace $p\ell^2(\Gamma)$. Choosing a non-zero function $f\in C(S^1)$ supported in the \emph{lower} half circle we obtain a non-zero element  $x\in \Cstarred(Z)\subset \Cstarred(\Gamma)$ with $xp=0$ and hence the norm on $\Cstarred(\Gamma)$ induced by $\pi$ is not the one induced by $\lambda_\Gamma$. We now only need to see that $\pi$ is faithful on $\CC\Gamma$. To this end, consider the trace-preserving conditional expectation $E\colon L\Gamma\to LZ$ \cite[Lemma 1.5.11]{brown-ozawa} and assume that $a\in \CC\Gamma$ is in the kernel of $\pi$. Then  $a^*a$ is also in the kernel of $\pi$ and since $E$ is an $LZ$-bimodule map \cite[Proposition 1.5.7]{brown-ozawa} we get
\[
0=E(\lambda_\Gamma(a^*a)p)=E(\lambda_\Gamma(a^*a))p.
\]
However, $E(\CC\Gamma)\subset \CC Z \simeq \Pol(z,\bar{z})\subset C(S^1)$ and therefore $E(\lambda_\Gamma(a^*a))=0$ and since $E$ is trace-preserving and the trace on $L\Gamma$ is faithful we conclude that $a^*a$, and hence $a$, is zero. 
\end{proof}

\begin{cor}\label{cor:conjecture-for-abelian}
An abelian group is algebraically $\Cstar$-unique if and only if it is locally finite (i.e.~pure torsion).
\end{cor}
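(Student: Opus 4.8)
The plan is to prove both implications, using at the outset the fact recorded above that for amenable groups --- and abelian groups are amenable --- the notions of $\Cstar$-uniqueness and $\Cstarred$-uniqueness coincide; this lets me pass freely between the two. I would also isolate the elementary group-theoretic fact that an abelian group is locally finite precisely when it is a torsion group: a finitely generated abelian group is finite if and only if it contains no element of infinite order (by the classification of finitely generated abelian groups), and the general statement then follows since every finitely generated subgroup of a torsion abelian group is itself a finitely generated torsion abelian group, hence finite.

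For the implication that local finiteness implies algebraic $\Cstar$-uniqueness, I would simply invoke the observation of \cite[Proposition 6.7]{GMR}, quoted in the introduction, that the complex group ring of any locally finite group carries a unique $\Cstar$-norm. No abelianness is needed for this direction.

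For the converse I would argue by contraposition. Suppose the abelian group $\Gamma$ is not locally finite. By the equivalence above it is then not a torsion group, so it contains an element $g$ of infinite order. Since $\Gamma$ is abelian, $g$ is central, and thus $\Gamma$ possesses a central element of infinite order. Proposition \ref{thm:central-element-of-infinite-order} now shows that $\CC\Gamma$ is not $\Cstarred$-unique, and by amenability it is therefore not $\Cstar$-unique either. This establishes that algebraic $\Cstar$-uniqueness forces local finiteness.

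The only point requiring any care --- and it is minor --- is the justification of the torsion/locally-finite equivalence together with the remark that abelianness makes every element central, so that Proposition \ref{thm:central-element-of-infinite-order} applies verbatim. The analytic content is entirely carried by that proposition and by the cited result of \cite{GMR}, so I expect no genuine obstacle to remain.
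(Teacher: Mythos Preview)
Your proposal is correct and follows exactly the argument the paper has in mind: the corollary is stated without proof precisely because one direction is \cite[Proposition 6.7]{GMR} and the other is an immediate application of Proposition~\ref{thm:central-element-of-infinite-order} to a non-torsion abelian group. Your added remarks on the torsion/locally-finite equivalence and the passage between $\Cstar$- and $\Cstarred$-uniqueness via amenability are the only details one might spell out, and you have them right.
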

\begin{rem}
The result in Corollary \ref{cor:conjecture-for-abelian} was also observed, independently and with different proofs, by Rostislav Grigorchuk, Magdalena Musat and Mikael R{\o}rdam (unpublished).
\end{rem}

\begin{rem}
The class of locally finite groups  has many stability properties --- for instance it is closed under subgroups, quotients and extensions and, moreover, being virtually locally finite is the same as being locally finite. However, verifying these properties for the class of $\Cstar$-unique groups seems to be a much bigger challenge.
\end{rem}

\section{The strong Atiyah conjecture and $\Cstarred$-uniqueness}

\subsection{The strong Atiyah conjecture}\label{sec:atiyah}
The key to our main result is the so-called strong Atiyah conjecture which is briefly described in the following. A good general reference is \cite[Chapter 10]{Luck02} where all of the results below can be found, and to which we also refer for the original references. 
 Let $\Gamma$ be a discrete group and denote by $\frac{1}{|\mathop\mathrm{FIN}(\Gamma)|}\ZZ$ the additive subgroup in $\QQ$ generated by the set
\[
\left\{\frac{1}{|\Lambda|} \ \Big\vert \ \Lambda \leq \Gamma \text{ a finite subgroup} \right\}.
\]
Given a matrix $A\in \MM_n(\CC\Gamma)$ we denote by $L_A\in L\Gamma\otimes \MM_n(\CC)\subset \BB(\ell^2(\Gamma)^n)$ the bounded operator given by left multiplication with $A$ (via the left regular representation of $\Gamma$). The strong Atiyah conjecture then predicts that
\[
\dim_{L\Gamma} \ker(L_A) \coloneqq (\tau\otimes \Tr)(P_{\ker L_A}) \in \frac{1}{|\mathop\mathrm{FIN}(\Gamma)|}\ZZ.
\]
Here $\dim_{L\Gamma}(-)$ denotes the von Neumann dimension of the (right) Hilbert $L\Gamma$-module $\ker(L_A)$ defined as the non-normalized trace of the kernel projection $P_{\ker L_A}$; see \cite{Luck02} for details on this. It should be noted that the strong Atiyah conjecture is  false in general \cite[Theorem 10.23]{Luck02}, but is known to hold for all groups which have a bound on the order of finite subgroups and belong to Linnell's class 
$\C$ \cite[Theorem 10.19]{Luck02}, the latter being the smallest class of groups which contain all free groups, is closed under directed unions and extensions by elementary amenable groups (i.e., if $\Lambda\trianglelefteqslant \Gamma$, $\Lambda \in \C$ and $\Gamma/\Lambda$ is elementary amenable, then $\Gamma\in \C$).
The above discussion motivates  the following notion.
\begin{defi} Let $\Gamma$ be a countable group. The \emph{torsion multiplier} of $\Gamma$ is defined as
\[
\theta(\Gamma) = \frac{1}{\mathop\mathrm{lcm}\{|H|\mid H\leqslant\Gamma\text{ finite\}}}\in [0,1].
\]
\end{defi}
In this definition, and in what follows, we use the convention that the least common multiple (lcm) of an infinite set of natural numbers is infinity and that $\frac{1}{\infty}=0$.
Note that if $\Gamma$ has an upper bound on the set of finite subgroups, then 
\[
 \frac{1}{|\text{FIN(G)}|}\ZZ= \left\{n\theta(\Gamma) \mid n\in \ZZ   \right\},
\]
and $\frac{1}{|\text{FIN(G)}|}\ZZ$ has $0$ as an accumulation point otherwise. In view of this, the strong Atiyah conjecture for a group $\Gamma$ with $\theta(\Gamma) > 0$ implies that the possible kernel dimensions are properly quantized in the sense that they can only take values in the discrete set $\left\{ n\theta(\Gamma) \mid n\in \NN   \right\}\subset \RR$.
Theorem \ref{thm:A} (i) and (ii) will follow directly from our main technical result, Theorem \ref{thm:main} below. The key idea in the proof is to play the aforementioned ``quantization'' of the kernel dimensions against an abundance of central projections in $L\Gamma$ with small traces which provide representations of $\Cstarred(\Gamma)$ with non-trivial kernels. To quantify this, we need the following definition.

\begin{defi}
The \emph{central granularity} of $\Gamma$ is defined as
\[
\sigma(\Gamma) = \inf \{\tau(p)\mid p\in \Proj(Z(L\Gamma)),\,p\neq 0\}\in [0,1].
\]
\end{defi}
We note that $\sigma(\Gamma) < 1$ if and only if $Z(L\Gamma)$ is nontrivial which is equivalent $\Gamma$ not being icc\footnote{recall that a group is \emph{icc} if all non-trivial conjugacy classes are infinite}.
The next proposition computes the central granularity of $\Gamma$ in group-theoretic terms. Recall that the \emph{FC-centre} $\Gamma_{\mathrm{fc}}$ is the normal subgroup of $\Gamma$ consisting of all elements with finite conjugacy classes. 
\begin{prop}\label{prop:granularity}
Let $\Gamma_{\mathrm{fc}} \trianglelefteqslant \Gamma$ be the FC-centre of $\Gamma$. Then
\[
\sigma(\Gamma) = \frac{1}{|\Gamma_{\mathrm{fc}}|},
\]
where the right-hand side is interpreted as $0$ if $|\Gamma_{\mathrm{fc}}| = \infty$.
\end{prop}
\begin{proof}



$\Gamma_{\mathrm{fc}}$ is an increasing union of a sequence of finitely generated normal subgroups $\Lambda_n\trianglelefteqslant \Gamma$; to see this, note that $\Gamma_\mathrm{fc}$ is clearly an increasing union of a sequence of finitely generated subgroups $\Lambda_n'$, and defining $\Lambda_n$ to be generated by the $\Gamma$-conjugacy classes of a finite system of generators for $\Lambda_n'$ yields the desired sequence of finitely generated subgroups which are normal in $\Gamma$. We now have two cases to consider:
\begin{enumerate}
\item[(i)] all $\Lambda_n$ are finite (equivalently, $\Gamma_{\mathrm{fc}}$ is a torsion group),
\item[(ii)] $\Lambda_n$ is infinite for some $n$.
\end{enumerate}
In case (i), setting $p_n:=\frac{1}{|\Lambda_n|}\sum_{g\in \Lambda_n} g$, we get a projection $p_n\in L\Gamma_{\mathrm{fc}}$ with $\tau(p_n)=\frac{1}{|\Lambda_n|}$; moreover, $p_n$ is central in $L\Gamma$ since $\Lambda_n$ is normal in $\Gamma$. This proves that $\sigma(\Gamma) = 0$ if $\Gamma_{\mathrm{fc}}$ is an infinite torsion group (in this case $|\Lambda_n|\to \infty$). If $\Gamma_{\mathrm{fc}}$ is finite, then the sequence stabilizes, and therefore we get a central projection $p$ in $L\Gamma$ with trace $\frac{1}{|\Gamma_{\mathrm{fc}}|}$. The centre of $L\Gamma$ consists of elements whose associated Fourier series in $\ell^2(\Gamma)=L^2(L\Gamma, \tau)$ are supported only on $\Gamma_{\mathrm{fc}}$ and are constant along conjugacy classes, and is therefore  contained in the centre of $L\Gamma_{\mathrm{fc}}$; hence we get $\frac{1}{|\Gamma_{\mathrm{fc}}|} \geqslant\sigma(\Gamma) \geqslant \sigma(\Gamma_{\mathrm{fc}})$. But we also have $L\Gamma_{\mathrm{fc}} = \mathbb C\Gamma_{\mathrm{fc}}$ which by representation theory of finite groups is isomorphic to a direct sum of matrix algebras $\bigoplus_\pi \mathbb M_{d_\pi}(\mathbb C)$ with the trace given by $\bigoplus_\pi \frac{d^2_\pi}{|\Gamma_{\mathrm{fc}}|} \tr$; thus, the minimal central projection has trace $\frac{1}{|\Gamma_{\mathrm{fc}}|}=\sigma(\Gamma_{\mathrm{fc}})$; this proves the claim.

In case (ii) we fix an $n\in\NN$ such that $\Lambda_n\eqqcolon\Lambda$ is infinite and note that since $\Lambda$ is generated by a finite number of elements with finite conjugacy classes, its centralizer $C_\Gamma(\Lambda)$ is of finite index in $\Gamma$.
We now claim that $L\Lambda$ has a diffuse von Neumann subalgebra and thus projections of arbitrarily small trace. This can be seen as follows: if $L\Lambda$ has a direct summand of type $\mathrm{II}_1$, it is clear because such von Neumann algebras are diffuse. Otherwise $L\Lambda$ is of type $\mathrm{I}$, but then $\Lambda$ is virtually abelian \cite[Lemma 3.3]{Luck97}, and hence, being infinite by assumption and finitely generated by construction, contains 
a copy of $\ZZ$ which generates a diffuse von Neumann algebra $L\ZZ\cong L^\infty(S^1)$.
In view of the above, for an arbitrary $\eps > 0$ there is a projection $p \in L\Lambda\subset L\Gamma_{\mathrm{fc}}$ of trace $\tau(p) < \frac{\eps}{[\Gamma:C_\Gamma(\Lambda)]}$.  Now let
\[
q\coloneqq \bigvee_{g\in \Gamma} \prescript{g}{}p,
\]  
where $ \prescript{g}{}p :=gpg^{-1}$.  Then $q$ is a central projection in $L\Gamma$. Moreover, $p$ is invariant under the centralizer $C_\Gamma(\Lambda)$ and upon choosing coset representatives $g_1,\dots, g_{[\Gamma:C_\Gamma(\Lambda)]}$ for $\Gamma/C_\Gamma(\Lambda)$ we obtain that
 \[
q = \bigvee_{i=1}^{[\Gamma:C_\Gamma(\Lambda)]} \prescript{g_i}{}p  
\]
and hence $\tau(q) \leqslant [\Gamma:C_\Gamma(\Lambda)]\cdot \tau(p) < \eps$. Thus $\sigma(\Gamma) = 0$.
\end{proof}


\begin{lemma}\label{lemma:small-support}
Let $\Gamma$ be a  discrete non-icc group. For every $\eps > 0$ there exists a nonzero projection $p\in Z(L\Gamma)$ with $\tau(p)< \sigma(\Gamma) + \eps$ and a nonzero, central element $x\in \Cstarred(\Gamma)$ with $xp^\perp=0$. 
\end{lemma}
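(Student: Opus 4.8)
The plan is to realise the centre $Z(L\Gamma)$ as the weak closure of a commutative $C^*$-subalgebra that already sits inside $\Cstarred(\Gamma)$, and then to produce $p$ and $x$ by elementary point-set topology on its spectrum. Concretely, let $B\subseteq \Cstarred(\Gamma)$ be the norm closure of the algebraic centre $Z(\CC\Gamma)$, i.e.~of the linear span of the finite conjugacy class sums. Then $B$ is a commutative $C^*$-algebra contained in $Z(L\Gamma)$, so that \emph{every} element of $B$ is automatically central in $\Cstarred(\Gamma)$. First I would check that $B$ is weakly dense in $Z(L\Gamma)$: the trace-preserving conditional expectation $\Phi\colon L\Gamma\to Z(L\Gamma)$ onto the centre coincides with the $\ell^2$-projection onto $Z(L\Gamma)$ and hence sends $\lambda_g$ to $\frac{1}{|C_g|}\sum_{h\in C_g}\lambda_h$ when $g$ has finite conjugacy class $C_g$ and to $0$ otherwise; in particular $\Phi(\CC\Gamma)\subseteq Z(\CC\Gamma)\subseteq B$. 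Since $\Phi$ is normal and restricts to the identity on $Z(L\Gamma)$, while $\CC\Gamma$ is weakly dense in $L\Gamma$, it follows that $\Phi(\CC\Gamma)$, and a fortiori $B$, is weakly dense in $Z(L\Gamma)$.

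Writing $Y$ for the (compact, metrisable) spectrum of $B$ and $\nu$ for the probability measure on $Y$ determined by $\tau(f)=\int_Y f\,d\nu$ for $f\in B=C(Y)$, faithfulness of $\tau$ makes $\nu$ of full support, and the weak density identifies $Z(L\Gamma)$ with $L^\infty(Y,\nu)$. Under this identification the nonzero central projections are exactly the indicators $\chi_E$ of positive-measure Borel sets $E\subseteq Y$, with $\tau(\chi_E)=\nu(E)$, so that $\sigma(\Gamma)=\inf\{\nu(E)\mid \nu(E)>0\}$. Because $\Gamma$ is non-icc the algebra $Z(L\Gamma)$ is nontrivial, hence $\nu$ is not a point mass and positive-measure sets of measure strictly below $1$ exist. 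Combining this with the defining infimum I can choose a Borel set $F$ with $0<\nu(F)<\min\{\sigma(\Gamma)+\eps,\,1\}$, and then, by inner regularity of $\nu$, a compact set $K\subseteq F$ with $0<\nu(K)<\sigma(\Gamma)+\eps$ and $\nu(K)<1$. Setting $p\coloneqq\chi_K$ gives a nonzero central projection of the required trace.

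It remains to manufacture $x$. Since $\nu(K)<1=\nu(Y)$ and $\nu$ has full support, $K$ is a proper closed subset of $Y$, so I may pick a point $y^*\in Y\setminus K$. Urysohn's lemma then provides $x\in C(Y)=B$ with $0\leq x\leq 1$, $x|_K=0$ and $x(y^*)=1$. By construction $x$ lies in $B\subseteq \Cstarred(\Gamma)$ and is central; the relation $x|_K=0$ yields $xp=x\chi_K=0$; and since the open set $\{x>\tfrac{1}{2}\}$ is nonempty (it contains $y^*$) it has positive $\nu$-measure by full support, so $x\neq 0$ in $L^\infty(Y,\nu)=Z(L\Gamma)$, and therefore $x\neq 0$ in $\Cstarred(\Gamma)$.

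I expect the only genuine obstacle to be the first step: upgrading the trivial existence of a small central projection in $L\Gamma$ (immediate from the definition of $\sigma(\Gamma)$) to a projection that is \emph{accessible from inside} $\Cstarred(\Gamma)$, together with an annihilating element of the reduced $C^*$-algebra. This is precisely what the weak density of $B=\overline{Z(\CC\Gamma)}$ in $Z(L\Gamma)$ secures, and the key point to verify is that the centre-valued expectation $\Phi$ maps $\CC\Gamma$ into $Z(\CC\Gamma)$, so that $B$ is indeed large enough to recover all of $Z(L\Gamma)$ in the weak topology. Once the centre is presented as $L^\infty(Y,\nu)$ with $B=C(Y)$, the construction of $p$ and $x$ reduces to routine regularity of $\nu$ and a single application of Urysohn's lemma.
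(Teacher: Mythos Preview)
Your proof is correct and follows essentially the same route as the paper's: identify $Z(L\Gamma)$ as $L^\infty$ of the spectrum of a commutative $C^*$-subalgebra of $\Cstarred(\Gamma)$, pick a small-measure set by the definition of $\sigma(\Gamma)$, shrink it to a compact by inner regularity, and take a continuous function vanishing there. The only cosmetic differences are that the paper works directly with $Z(\Cstarred(\Gamma))$ rather than your $B=\overline{Z(\CC\Gamma)}$ (invoking Kaplansky and the centre-valued trace for weak density, where you use the normal conditional expectation $\Phi$), and it takes the distance function to $K$ instead of appealing to Urysohn.
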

\begin{proof}
Since $\Gamma$ is non-icc, $Z(L\Gamma)\neq \CC1$ so $\sigma(\Gamma)<1$. Let $\varps>0$ be given and assume, without loss of generality, that $\sigma(\Gamma)+\varps<1$.
One has $Z(L\Gamma)=Z(\Cstarred(\Gamma))''=Z(\CC\Gamma)''$, as can bee seen for instance by using Kaplansky's density theorem together with the center valued trace, and noting that $Z(\CC\Gamma)$ consists of the elements whose coefficients are constant along conjugacy classes.
By Gelfand duality, $Z(\Cstarred(\Gamma))$ is isomorphic to the $\Cstar$-algebra $C(Z)$ of continuous functions on its Gelfand spectrum $Z$, which is a compact Hausdorff space; it is metrizable because $\Cstarred(\Gamma)$ is separable. The canonical trace $\tau$ thus gives a regular Borel probability measure $\mu$ on $Z$ \cite[Theorem 2.14]{rudin-real-and-complex} and an isomorphism $Z(L\Gamma)=Z(\Cstarred(\Gamma))''\cong L^\infty(Z,\mu)$ compatible with the natural inclusions. Projections in $Z(L\Gamma)$ correspond via this isomorphism to measurable subsets of $Z$ (up to null sets), and we therefore obtain a measurable subset $A\subset Z$ such that $0 < \mu(A)<\sigma(\Gamma)+\varps/2$.  By regularity of $\mu$, there exists $U\supseteq A$ open such that 
\[
0 < \mu (A) \leqslant \mu(U) < \sigma(\Gamma) + \eps < 1.
\]
Now, there is a non-zero element $x\in C(Z)$ vanishing on the compact set $K\coloneqq Z\setminus U$ (for instance, the distance function to $K$); letting $p$ be the projection corresponding to $U$ finishes the proof.
\end{proof}

The following lemma gives a concrete description of the decomposition of the left regular representation of a discrete group $\Gamma$ over the cosets of a finite index normal subgroup $\Lambda$. 

\begin{lemma}\label{lemma:coset-decomposition}
Let $\Lambda\trianglelefteqslant \Gamma$ be a normal subgroup of finite index. 
For every choice of coset representatives $g_1,\dots, g_{[\Gamma: \Lambda]}\in \Gamma$ there 
exists a trace-preserving inclusion of von Neumann algebras $\pi\colon (L\Gamma,\tau)\hookrightarrow (\mathbb M_{[\Gamma : \Lambda]}(L\Lambda),\tau\otimes\mathrm{tr})$ which restricts to corresponding inclusions at the level of reduced $\Cstar$-algebras and complex group rings, and which for $x\in L\Lambda$ is given by
\begin{equation}\label{eq:diagonal-elements}
\pi(x) = \mathop\mathrm{diag}( \prescript{g_1}{} x,\prescript{g_2}{} x,\dots, \prescript{g_{[\Gamma:\Lambda]}}{} x), 
\end{equation}
where $\prescript{g}{} x = gxg^{-1}$ is the conjugation action of $g\in\Gamma$ on $L\Lambda$. 
\end{lemma}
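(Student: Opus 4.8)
The plan is to realise $\pi$ spatially, as conjugation by a unitary coming from the coset decomposition of $\ell^2(\Gamma)$, and then to deduce all three assertions (the von Neumann algebra inclusion, compatibility with $\Cstarred$ and $\CC\Gamma$, and trace-preservation) from a single computation on the group elements $\lambda_\gamma$. Write $n=[\Gamma:\Lambda]$ and decompose $\Gamma$ into the left cosets $g_1\Lambda,\dots,g_n\Lambda$, so that every $\gamma\in\Gamma$ is uniquely of the form $g_ih$ with $h\in\Lambda$. First I would define a unitary $U\colon\ell^2(\Gamma)\to\CC^n\otimes\ell^2(\Lambda)$ by $U\delta_{g_ih}=e_i\otimes\delta_h$ and set $\pi\coloneqq\Ad_U$, restricted to $L\Gamma\subset\BB(\ell^2(\Gamma))$. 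Being conjugation by a fixed unitary, $\pi$ is automatically an injective normal $*$-homomorphism, so injectivity requires no separate argument. The crux is the explicit form of $\pi(\lambda_\gamma)$: writing $\gamma g_i=g_{\sigma(\gamma,i)}h_{\gamma,i}$ with $h_{\gamma,i}\in\Lambda$ and $\sigma(\gamma,\cdot)$ the induced permutation of $\{1,\dots,n\}$, one finds
\[
\pi(\lambda_\gamma)=\sum_{i=1}^{n} e_{\sigma(\gamma,i),i}\otimes\lambda_{h_{\gamma,i}},
\]
a monomial matrix with a single entry from $\lambda(\Lambda)\subset\CC\Lambda$ in each row and column.

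From this formula the inclusions follow at once. Since each $\pi(\lambda_\gamma)$ lies in $\MM_n(\CC\Lambda)$ and $\pi$ is multiplicative and $*$-preserving, $\pi(\CC\Gamma)\subseteq\MM_n(\CC\Lambda)$, and then, taking norm- and weak-closures, $\pi(\Cstarred(\Gamma))\subseteq\MM_n(\Cstarred(\Lambda))$ and $\pi(L\Gamma)\subseteq\MM_n(L\Lambda)$; here I would invoke that $\pi$ is normal, that $\CC\Gamma$ is weak-$*$ dense in $L\Gamma$, and that $\MM_n(L\Lambda)$ is weak-$*$ closed. (Alternatively, one identifies $\MM_n(L\Lambda)=(1\otimes\rho(\Lambda))'$ by the commutation theorem and observes that $U$ conjugates the right $\Lambda$-translations on $\ell^2(\Gamma)$ to $1\otimes\rho(\Lambda)$, with which $L\Gamma$ commutes.) Specialising the formula to $\gamma=\ell\in\Lambda$ gives $\sigma(\ell,i)=i$ and $h_{\ell,i}=g_i^{-1}\ell g_i$, so $\pi(\lambda_\ell)$ is diagonal with the $i$-th entry a conjugate of $\ell$; once the coset representatives are labelled to match the convention $x^{g}=gxg^{-1}$, this is precisely \eqref{eq:diagonal-elements}, which then extends from $\CC\Lambda$ to all of $L\Lambda$ by normality.

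It remains to verify that $\pi$ is trace-preserving, which I would again test on $\CC\Gamma$. For $\gamma\notin\Lambda$, normality of $\Lambda$ forces $g_i^{-1}\gamma g_i\notin\Lambda$ for every $i$, hence $\sigma(\gamma,i)\neq i$ throughout and $\pi(\lambda_\gamma)$ has vanishing diagonal, so $(\tau\otimes\tr)(\pi(\lambda_\gamma))=0=\tau(\lambda_\gamma)$; for $\gamma=\ell\in\Lambda$ one computes $(\tau\otimes\tr)(\pi(\lambda_\ell))=\tfrac1n\sum_{i}\tau(\lambda_{g_i^{-1}\ell g_i})$, which equals $1$ if $\ell=e$ and $0$ otherwise, again matching $\tau(\lambda_\ell)$. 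As $\tau$ and $(\tau\otimes\tr)\circ\pi$ are both normal and agree on the weak-$*$ dense subalgebra $\CC\Gamma$, they agree on $L\Gamma$. I expect the only genuine subtlety to be bookkeeping: fixing the coset conventions so that the conjugation in \eqref{eq:diagonal-elements} comes out as $g_ixg_i^{-1}$ rather than $g_i^{-1}xg_i$, and checking carefully that passing from $\CC\Gamma$ to $L\Gamma$ by normality (respectively by norm-continuity for $\Cstarred$) is legitimate for each of the three claims. Everything else is a direct consequence of the monomial-matrix description of $\pi(\lambda_\gamma)$.
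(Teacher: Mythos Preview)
Your approach is essentially identical to the paper's: both realise $\pi$ spatially via the coset decomposition $\ell^2(\Gamma)\cong\bigoplus_i\ell^2(\Lambda)$ and then read everything off from the action on group elements, with the paper simply declaring the trace-preservation and the $\CC\Gamma$/$\Cstarred$-level statements ``routine'' where you actually write them out. The one concrete difference is exactly the bookkeeping you flag: the paper uses the cosets $g_i^{-1}\Lambda$ (i.e.\ $U\delta_{g_i^{-1}h}=e_i\otimes\delta_h$), which makes the diagonal entries come out as $h^{g_i}=g_ihg_i^{-1}$ directly rather than $g_i^{-1}hg_i$; your proposed relabelling fixes this and is the paper's choice.
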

\begin{proof}
Choose coset representatives $g_1,g_2,\dots,g_{[\Gamma:\Lambda]}$ of $\Gamma/\Lambda$ and consider the isomorphisms of Hilbert spaces
\[
\ell^2(\Gamma) \cong \bigoplus_{i=1}^{[\Gamma:\Lambda]} \ell^2(g_i^{-1}\Lambda) \cong \bigoplus_{i=1}^{[\Gamma:\Lambda]} \ell^2(\Lambda).
\]
These induce a $\ast$-isomorphism $\pi\colon \mathbb B(\ell^2\Gamma) \xrightarrow{\cong} \mathbb M_n(\BB(\ell^2(\Lambda)))$. It is routine to check that $\pi$ restricts to a trace-preserving inclusion of $\CC\Gamma$ into $\MM_{[\Gamma :\Lambda]}(\CC\Lambda)$ which automatically implies the corresponding results for the reduced $\Cstar$-algebras and von Neumann algebras. Finally, for $h\in \Lambda$ we have
\[
\pi(h) = \mathop\mathrm{diag}(\lambda(h), \dots, \lambda(h)) \in \bigoplus_{i=1}^{[\Gamma:\Lambda]} \mathbb B(\ell^2(g_i^{-1}\Lambda)),
\]
and thus formula \eqref{eq:diagonal-elements} follows in view of the identity 
\[
h g_i^{-1} h' = g_i^{-1} (\prescript{g_i}{} h) h',\quad h,h'\in\Lambda.\qedhere
\]
\end{proof}

\begin{thm}\label{thm:main}
Let $\Lambda$ be a discrete group satisfying the strong Atiyah conjecture and let $\Lambda\trianglelefteqslant \Gamma$ be a finite index inclusion of $\Lambda$ into a group $\Gamma$ as a normal subgroup. If $[\Gamma:\Lambda]^2\cdot \sigma(\Lambda) < \theta(\Lambda)$ then $\CC\Gamma$ is not $\Cstarred$-unique.
\end{thm}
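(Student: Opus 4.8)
The plan is to exhibit a $*$-representation of $\Cstarred(\Gamma)$ that is faithful on $\CC\Gamma$ but not isometric on $\Cstarred(\Gamma)$; by the criterion that $\CC\Gamma$ is $\Cstarred$-unique if and only if every non-trivial closed two-sided ideal of $\Cstarred(\Gamma)$ meets $\CC\Gamma$ non-trivially (the first lemma of Section~2), the kernel of such a representation witnesses that $\CC\Gamma$ is not $\Cstarred$-unique. Write $n:=[\Gamma:\Lambda]$ and $\theta:=\theta(\Lambda)$. The hypothesis forces $\theta>0$, so $1/\theta=\operatorname{lcm}\{|H|\mid H\leqslant\Lambda\text{ finite}\}\in\NN$; in particular the strong Atiyah conjecture for $\Lambda$ guarantees $\dim_{L\Lambda}\ker(L_A)\in\theta\NN_0$ for every matrix $A\in\MM_m(\CC\Lambda)$.

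First I would transport this quantization to $\Gamma$. Fixing coset representatives $g_1,\dots,g_n$ (with $g_1=e$) and using the trace-preserving inclusion $\pi\colon L\Gamma\hookrightarrow\MM_n(L\Lambda)$ of Lemma~\ref{lemma:coset-decomposition}, I note that $\pi$ maps $\CC\Gamma$ into $\MM_n(\CC\Lambda)$ and that, being a normal $*$-embedding, it intertwines kernel projections: $\pi(P_{\ker\lambda_\Gamma(a)})=P_{\ker L_{\pi(a)}}$ for $a\in\CC\Gamma$. Since $\pi$ is trace preserving and $\tau\otimes\tr=\tfrac1n(\tau\otimes\Tr)$, applying $\tau\otimes\tr$ gives $\dim_{L\Gamma}\ker\lambda_\Gamma(a)=\tfrac1n\dim_{L\Lambda}\ker(L_{\pi(a)})\in\tfrac{\theta}{n}\NN_0$. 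As $\pi$ is injective, $a\neq0$ yields $L_{\pi(a)}\neq0$, hence $\dim_{L\Lambda}\ker(L_{\pi(a)})\leqslant n-\theta$ (using $n\in\theta\NN_0$), and therefore $\dim_{L\Gamma}\ker\lambda_\Gamma(a)\leqslant1-\tfrac{\theta}{n}$.

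Next I would produce a small central projection together with a reduced-algebra element supported on it. Because $\sigma(\Lambda)<\theta/n^2$, there is a non-zero $y\in Z(\Cstarred(\Lambda))\subseteq\Cstarred(\Gamma)$ whose central support $p_y\in Z(L\Lambda)$ has $\tau(p_y)<\theta/n^2$ (see the last paragraph for this point). Symmetrising, $P:=\bigvee_{i=1}^{n}p_y^{g_i}$ is $\Gamma$-invariant, hence lies in $Z(L\Gamma)$, and $\tau(P)\leqslant n\,\tau(p_y)<\theta/n$. Now set $Q:=1-P\in Z(L\Gamma)$ and let $\rho:=\lambda_\Gamma(\,\cdot\,)Q$ be the compression of the regular representation to $Q\ell^2(\Gamma)$, a $*$-representation of $\Cstarred(\Gamma)$. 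Since $\tau(Q)=1-\tau(P)>1-\theta/n$, the bound of the previous paragraph shows that no non-zero $a\in\CC\Gamma$ can satisfy $Q\ell^2(\Gamma)\subseteq\ker\lambda_\Gamma(a)$, so $\rho$ is faithful on $\CC\Gamma$. On the other hand $p_y\leqslant P$ gives $Py=y$, whence $\rho(y)=y(1-P)=0$ while $y\neq0$; thus $\ker\rho$ is a non-trivial closed ideal meeting $\CC\Gamma$ trivially, and $\CC\Gamma$ is not $\Cstarred$-unique.

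The step I expect to require the most care is the construction of $y$: Lemma~\ref{lemma:small-support} as stated produces an element \emph{vanishing} on a small central projection, whereas here I need the dual object, namely a non-zero element of the reduced $\Cstar$-algebra whose central support is small. I would obtain it by re-running the Gelfand-spectrum argument in the proof of Lemma~\ref{lemma:small-support} for $\Lambda$, but replacing the inner-regularity step by \emph{outer} regularity: writing $Z(\Cstarred(\Lambda))\cong C(\mathcal Z)$ and $Z(L\Lambda)\cong L^\infty(\mathcal Z,\mu)$, the inequality $\sigma(\Lambda)<\theta/n^2$ furnishes a measurable $S\subseteq\mathcal Z$ with $0<\mu(S)<\theta/n^2$, outer regularity furnishes a non-empty open $U\supseteq S$ with $\mu(U)<\theta/n^2$, and a non-zero bump function $y$ supported in $U$ has central support $p_y=\chi_{\{y\neq0\}}\leqslant\chi_U$ of trace $<\theta/n^2$. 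Once this element and the quantization $\dim_{L\Gamma}\ker\lambda_\Gamma(a)\in\tfrac{\theta}{n}\NN_0$ are in hand, the trace estimate $\tau(P)\leqslant n\,\tau(p_y)$, the transport of kernel projections along $\pi$, and the faithfulness/non-isometry bookkeeping are all routine.
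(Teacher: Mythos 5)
Your proposal is correct, and globally it is the same argument as the paper's: symmetrise a small central projection of $L\Lambda$ over coset representatives, compress the regular representation by the complement, and play the Atiyah quantization of kernel dimensions against a trace estimate to obtain faithfulness on $\CC\Gamma$, concluding with the ideal criterion from Section 2. One difference is cosmetic: you compress $\lambda_\Gamma$ by $Q=1-P\in Z(L\Gamma)$ and transport the quantization to $\Gamma$ through the trace-preserving embedding of Lemma~\ref{lemma:coset-decomposition}, getting $\dim_{L\Gamma}\ker\lambda_\Gamma(a)\in\tfrac{\theta}{n}\NN_0$, whereas the paper compresses $\pi$ itself by $\tilde q^\perp$ inside $\MM_n(L\Lambda)$ and applies the Atiyah conjecture to matrices over $\CC\Lambda$; these are unitarily equivalent formulations. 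The second difference is substantive, and it is exactly the point you flagged as delicate: the paper feeds Lemma~\ref{lemma:small-support} \emph{as stated} into the proof --- a non-zero central $x\in\Cstarred(\Lambda)$ with $xp=0$ for a small-trace projection $p$ --- and then asserts ``as $xp=0$, we have $x\in\ker\pi_q$''. That deduction does not hold: $xp=0$ says the support of $x$ sits under $1-p$, so compressing by $\tilde q^\perp\leq\diag(1-p,\dots,1-p)$ need not annihilate $x$. (Already for $\Gamma=\Lambda=\ZZ$, where $n=1$ and $q=p$: taking $p$ the indicator of a short arc $K\subset S^1$ and $x$ the distance function to $K$, one has $x(1-p)=x\neq 0$.) What the argument genuinely requires is the dual object you construct: a non-zero central element whose \emph{support} projection is small, i.e.\ $xp=x$ with $\tau(p)<\theta/n^2$, obtained from outer rather than inner regularity together with a Urysohn bump function; with that in hand, $x^{g_i}q^\perp=x^{g_i}p^{g_i}q^\perp=0$ and the remainder of the paper's computation runs verbatim. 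So your write-up is not merely a correct proof along the paper's lines; it repairs a complement error in the paper's combination of Lemma~\ref{lemma:small-support} with Theorem~\ref{thm:main}, and the form of the lemma you prove (small support set rather than small zero set) is the one that should be recorded.
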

\begin{proof}
The assumption $[\Gamma:\Lambda]^2\cdot \sigma(\Lambda) < \theta(\Lambda)$ forces $\Lambda$ to be non-icc and applying
Lemma \ref{lemma:small-support} we get a projection $p\in Z(L\Lambda)$ with $\tau(p) < \frac{\theta(\Lambda)}{[\Gamma:\Lambda]^2}$ and a non-zero central element $x\in \Cstarred(\Lambda)$ with  $xp^\perp=0$. We are going to construct a representation of $\Cstarred(\Gamma)$ which is injective on $\CC\Gamma$ but with $x$ in the kernel. 
To this end, consider a set of coset representatives $g_1,\dots, g_{[\Gamma:\Lambda]}$ for $\Gamma/\Lambda$ and the $*$-homomorphism $\pi\colon L\Gamma \to \MM_{[\Gamma:\Lambda]}(L\Lambda)$ provided by Lemma \ref{lemma:coset-decomposition}. 
From this we obtain a central projection $q := \bigvee_{i=1}^{[\Gamma:\Lambda]}  \prescript{g_i}{} p  \in Z(L\Lambda)$, 
and  cutting $\pi$ with the complement of $ \tilde{q}:= \mathop\mathrm{diag}(q, \dots, q)\in Z(\MM_{[\Gamma : \Lambda]}(L\Lambda))$, we get a representation
\[
\pi_q\colon \Cstarred(\Gamma) \to \mathbb B \left(\ell^2(\Lambda)^{[\Gamma:\Lambda]}\tilde{q}^\perp\right),
\]
\[
a\mapsto \pi(a) \tilde{q}^\perp.
\]
As $q^\perp = \bigwedge_{i=1}^{[\Gamma:\Lambda]}  \prescript{g_i}{} (p^\perp) $ and $xp^\perp=0$, it follows that $x\in \ker\pi_q$ in view of \eqref{eq:diagonal-elements}. Let $a\in \CC\Gamma\cap \ker \pi_q$. This means that $\pi(a)\tilde{q}^\perp = 0$, and thus the kernel projection $r$ of $\pi(a)$ satisfies  $r \geqslant \tilde{q}^\perp$. Therefore
\[
(\tau\otimes\mathrm{Tr})(r)\geqslant (\tau\otimes\mathrm{Tr})(\tilde{q}^\perp) \geq  [\Gamma:\Lambda](1 - [\Gamma:\Lambda]\tau(p)) > [\Gamma:\Lambda] - \theta(\Lambda).
\] 
On the other hand, the assumption  $[\Gamma:\Lambda]^2\cdot \sigma(\Lambda) < \theta(\Lambda)$ forces an upper bound on the order of finite subgroups in $\Lambda$, i.e.~$\theta(\Lambda)>0$, and since $\Lambda$ is furthermore assumed to satisfy the strong Atiyah conjecture we obtain (using the notation of Section \ref{sec:atiyah}) that
\[
\dim_{L\Lambda}(\ker (L_A)) = (\tau\otimes\mathrm{Tr})(P_{\ker L_A}) \in \{n\theta(\Lambda) \mid n\in \ZZ\}
\]
for any matrix $A\in \MM_{[\Gamma :\Lambda]}(\CC\Lambda)$. Thus $(\tau\otimes\mathrm{Tr})(r)\leqslant [\Gamma:\Lambda] - \theta(\Lambda)$ unless $\pi(a) = 0$. This proves that $\pi_q$ is injective on $\CC\Gamma$ and hence completes the proof.
\end{proof}

As a corollary, we deduce that some important families of  groups are not $\Cstarred$-unique. In particular this includes the groups mentioned in Theorem \ref{thm:A} (i) and (ii),  and together with Proposition \ref{thm:central-element-of-infinite-order} this completes the proof of Theorem \ref{thm:A}.
\begin{cor}\label{cor:poly-growth}
All groups in following classes are not $\Cstarred$-unique:
\begin{itemize}
\item[(i)] Torsion free, non-icc groups satisfying the strong Atiyah  conjecture; in particular all elementary
amenable, non-icc, torsion free groups.

\item[(ii)] Virtually polycyclic groups with infinite FC-centre; in particular, all infinite groups of polynomial growth.
\end{itemize}
\end{cor}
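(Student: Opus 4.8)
The plan is to reduce every case to a single application of Theorem~\ref{thm:main}, the only issue being to locate a suitable finite-index normal subgroup $\Lambda\trianglelefteqslant\Gamma$ satisfying the strong Atiyah conjecture together with the numerical inequality $[\Gamma:\Lambda]^2\cdot\sigma(\Lambda)<\theta(\Lambda)$. The two relevant quantities are easy to control at the extremes: torsion-freeness forces $\theta(\Lambda)=1$ (the only finite subgroup being trivial), while an infinite finitely generated FC-centre forces $\sigma(\Lambda)=0$ via Proposition~\ref{prop:granularity}(ii). In both situations the inequality becomes trivially satisfied, so the whole content of the corollary lies in the underlying group theory.

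For part~(i) I would simply take $\Lambda=\Gamma$, so that $[\Gamma:\Lambda]=1$. Since $\Gamma$ is torsion-free we have $\theta(\Gamma)=1$, and since $\Gamma$ is non-icc the remark following the definition of central granularity gives $\sigma(\Gamma)<1=\theta(\Gamma)$; thus Theorem~\ref{thm:main} applies verbatim. For the ``in particular'' clause I would observe that an elementary amenable group lies in Linnell's class $\C$ (being an extension of the trivial group, which is free, by an elementary amenable group) and that torsion-freeness yields a bound on the orders of finite subgroups; hence such groups satisfy the strong Atiyah conjecture by the criterion recalled in Section~\ref{sec:atiyah}, and the first bullet applies.

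For part~(ii) the strategy is to pass to a torsion-free witness. A virtually polycyclic group contains a finite-index poly-$\ZZ$ subgroup $H$, and replacing $H$ by its normal core produces a normal, finite-index, torsion-free subgroup $\Lambda\trianglelefteqslant\Gamma$ which is again polycyclic (being a subgroup of $H$). Being polycyclic and torsion-free, $\Lambda$ is elementary amenable with a bound on its finite subgroups, hence satisfies the strong Atiyah conjecture, and $\theta(\Lambda)=1$. To see that $\sigma(\Lambda)=0$ I would note that $\Gamma_{\mathrm{fc}}\cap\Lambda$ is finite-index in the infinite group $\Gamma_{\mathrm{fc}}$, hence infinite, and that its elements have finite $\Lambda$-conjugacy classes (these being contained in the finite $\Gamma$-conjugacy classes); therefore $\Lambda_{\mathrm{fc}}$ is an infinite subgroup of the polycyclic group $\Lambda$ and is in particular finitely generated, so Proposition~\ref{prop:granularity}(ii) gives $\sigma(\Lambda)=0$. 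Then $[\Gamma:\Lambda]^2\cdot\sigma(\Lambda)=0<1=\theta(\Lambda)$ and Theorem~\ref{thm:main} finishes this case.

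It remains to explain why infinite groups of polynomial growth fall under part~(ii), and this is where I expect the only genuine work. By Gromov's theorem such a group $\Gamma$ is virtually nilpotent, hence virtually polycyclic, so I must verify that $\Gamma_{\mathrm{fc}}$ is infinite. Choosing a finite-index \emph{normal} nilpotent subgroup $N$ (for instance the core of a nilpotent finite-index subgroup), which is infinite and finitely generated, I would prove that $Z(N)$ is infinite by induction on the nilpotency class: if $Z(N)$ were finite then $Z(N/Z(N))$ would be infinite by induction, yet the commutator pairing $x\mapsto(n\mapsto[x,n])$ embeds $Z_2(N)/Z(N)$ into the finite group $\Hom(N,Z(N))$, a contradiction. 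Finally, since $Z(N)$ is characteristic in the normal subgroup $N$ it is normal in $\Gamma$, and $N$ acts trivially on it by conjugation, so every element of $Z(N)$ has $\Gamma$-conjugacy class of size at most $[\Gamma:N]$; thus $Z(N)\subseteq\Gamma_{\mathrm{fc}}$ and $\Gamma_{\mathrm{fc}}$ is infinite, reducing the polynomial-growth case to the already-treated part~(ii).
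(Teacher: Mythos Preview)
Your proof is correct and follows the paper's approach: apply Theorem~\ref{thm:main} with $\Lambda=\Gamma$ in~(i), and in~(ii) pass to a finite-index normal polycyclic subgroup whose FC-centre is infinite and finitely generated, so that $\sigma(\Lambda)=0$ by Proposition~\ref{prop:granularity}(ii). The only cosmetic differences are that the paper does not insist on $\Lambda$ being torsion-free in~(ii) (it invokes Hirsch's theorem to get $\theta(\Lambda)>0$ instead of your $\theta(\Lambda)=1$), and it deduces that the centre of the nilpotent subgroup is infinite from torsion-freeness rather than by your induction on the nilpotency class.
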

\begin{proof}
To see (i), note that the existence of a non-trivial finite conjugacy class implies the existence of a non-trivial central element in $\CC\Gamma$ (namely the sum of the elements in the finite conjugacy class) and hence a non-trivial projection in $Z(L\Gamma)$; thus $\sigma(\Gamma)<1$. Moreover, since $\Gamma$ is torsion free, $\theta(\Gamma)=1$ and since $\Gamma$ is assumed to satisfy the strong Atiyah conjecture it follows $\Cstarred$-unique by Theorem \ref{thm:main}. The last statement in (i) follows directly from this since the elementary amenable groups are contained in Linnell's class $\C$ (see Section \ref{sec:atiyah}) for which the strong Atiyah conjecture is known to hold in the presence of a bound on the order of finite subgroups \cite[Theorem 10.19]{Luck02}.\\
To see (ii), let $\Lambda\trianglelefteqslant \Gamma$ be a normal finite index polycyclic subgroup of $\Gamma$.
As $\Gamma$ has infinite FC-centre, so does $\Lambda$ and the FC-centre of $\Lambda$ is moreover finitely generated by polycyclicity. 
A classical result by Hirsch \cite[Theorem 3.21]{hirsch-soluble-iii}
 implies that the orders of finite subgroups of $\Lambda$ are bounded; thus $\theta(\Lambda) > 0$. On the other hand, $\sigma(\Lambda) = 0$ by Proposition \ref{prop:granularity} (ii). Moreover, polycyclic groups, being elementary amenable, satisfy the strong Atiyah conjecture. Thus, $\CC\Gamma$ follows non-$\Cstarred$ unique by Theorem \ref{thm:main}. 

Finally, the claim about infinite groups of polynomial growth follows by first observing that by Gromov's theorem \cite{Gromov-poly-growth} these are exactly finitely generated virtually nilpotent groups. As finitely generated nilpotent groups are polycyclic, the claim follows once we argue that virtually nilpotent groups automatically have infinite FC-centre. To see this, recall that a finitely generated virtually nilpotent group $\Gamma$ contains a finite index torsion free nilpotent normal subgroup $\Lambda$ (by polycyclicity and \cite[Theorem 3.21]{hirsch-soluble-iii}). Now it follows that the centre of $\Lambda$ is infinite, and therefore so is the FC-centre $\Lambda_{\mathrm{fc}}$; but as $\Lambda\trianglelefteqslant \Gamma$ is a finite index inclusion, $\Lambda_{\mathrm{fc}}\subseteq \Gamma_{\mathrm{fc}}$. Thus, $\Gamma_{\mathrm{fc}}$ is infinite.
\end{proof}


\def\cprime{$'$} \def\cprime{$'$}

\end{document}